\documentclass[12pt]{amsart}
\usepackage{amssymb}
\usepackage{amsmath}
\usepackage[hidelinks]{hyperref}
\usepackage{etoolbox}
\usepackage{enumitem}
\usepackage{xcolor}
\usepackage{tikz-cd}
\usepackage{manfnt}
\usepackage[normalem]{ulem}
\hypersetup{
	colorlinks,
	linkcolor={red!30!black},
	citecolor={blue!50!black},
	urlcolor={blue!80!black}
}
\makeindex

\newtheorem{theorem}{Theorem}[section]
\newtheorem{lemma}[theorem]{Lemma}

\theoremstyle{remark}
\newtheorem{remark}[theorem]{Remark}

\newcommand{\F}{\mathbb{F}}
\newcommand{\Z}{\mathbb{Z}}
\newcommand{\Q}{\mathbb{Q}}

\newcommand{\C}{\mathbb{C}}
\newcommand{\Cc}{\mathbb{C}^{\times}}

\newcommand{\cC}{\mathcal{C}}

\newcommand{\cO}{\mathcal{O}}

\newcommand{\bbG}{\mathbb{G}}
\newcommand{\bbH}{\mathbb{H}}

\newcommand{\fg}{\mathfrak{g}}
\newcommand{\fh}{\mathfrak{h}}

\newcommand{\fu}{\mathfrak{u}}

\newcommand{\ft}{\mathfrak{t}}

\newcommand{\fm}{\mathfrak{m}}

\newcommand{\bsl}{\backslash}
\newcommand{\ra}{\rightarrow}
\newcommand{\ira}{\hookrightarrow}

\newcommand{\xra}{\xrightarrow}

\newcommand{\ol}{\overline}
\newcommand{\what}{\widehat}

\newcommand{\Lie}{\operatorname{Lie}}
\newcommand{\rank}{\operatorname{rank}}

\newcommand{\cha}{\operatorname{char}}
\newcommand{\Ad}{\operatorname{Ad}}

\newcommand{\id}{\mathrm{id}}

\newcommand{\cci}{\mathcal{C}_c^{\infty}}

\newcommand{\Hom}{\operatorname{Hom}}

\newcommand{\supp}{\operatorname{supp}}

\newcommand{\FT}{\operatorname{FT}}

\newcommand{\SO}{\mathrm{SO}}

\newcommand{\Sp}{\mathrm{Sp}}
\newcommand{\GL}{\mathrm{GL}}

\newcommand{\Grs}{G^{\mathrm{rs}}}
\newcommand{\Gss}{G^{\mathrm{ss}}}

\newcommand{\fhrs}{\mathfrak{h}^{\mathrm{rs}}}
\newcommand{\fhss}{\mathfrak{h}^{\mathrm{ss}}}

\newcommand{\ftr}{\mathfrak{t}^{\mathrm{r}}}
\newcommand{\der}{\mathrm{der}}

\begin{document}

\title{On local integrability results for $p$-adic reductive groups}
\author{Cheng-Chiang Tsai}
\thanks{The author is supported by NSTC grant 114-2115-M-001-009, 114-2628-M-001-003, and AS grant AS-IV-115-M07.}
\email{chchtsai@as.edu.tw}
\address{Institute of Mathematics, Academia Sinica, 6F, Astronomy-Mathematics Building, No. 1,
Sec. 4, Roosevelt Road, Taipei, Taiwan \vskip.2cm
also Department of Applied Mathematics, National Sun Yat-Sen University, and Department of Mathematics, National Taiwan University}
\begin{abstract}
    We present a short proof, based on local character expansions, of the celebrated theorem of Harish-Chandra about local integrability of complex characters of $p$-adic reductive groups. The proof gives an algebraic incarnation of the local integrability that works for some coefficients different from $\mathbb{C}$, verifies local integrability in cases that appear not covered in the literature, and shows that a character is locally-$L^{\alpha}$ for some specified $\alpha>1$ as in \cite{GGH23}.
\end{abstract}
\makeatletter
\patchcmd{\@maketitle}
{\ifx\@empty\@dedicatory}
{\ifx\@empty\@date \else {\vskip3ex \centering\footnotesize\@date\par\vskip1ex}\fi
    \ifx\@empty\@dedicatory}
{}{}
\patchcmd{\@adminfootnotes}
{\ifx\@empty\@date\else \@footnotetext{\@setdate}\fi}
{}{}{}
\makeatother
\maketitle 

\section{Introduction}\label{sec:main}
Let $F$ be a non-archimedean local field. We fix a reductive group $\bbG$ over $F$ (possibly disconnected). Fix also a non-trivial additive character $\psi:(F,+)\ra\Cc$; for ease of reading we will write our proof in the traditional language of complex coefficients, but explain in Remark \ref{rmk:alg} that we can work with fields such as $\ol{\Q_{\ell}}$, and, for exponentially large $\ell$, with $\ol{\F_{\ell}}$.
Let $\pi$ be a finitely generated admissible $\C$-representation of $G:=\bbG(F)$. For $G$ and other $p$-adic reductive groups we will fix a Haar measure on each of them, as well as an translation-invariant measure on each vector space over $F$; the choices will never matter.

We assume Hypotheses \ref{D}, \ref{C}, \ref{A} and \ref{B} in \S\ref{sec:hyp}, which states roughly that local character expansions hold, and either $\cha(F)=0$ or $p$ does not divide $|W_G|$. Our goal is to prove the celebrated theorem \cite[Theorem 16.3]{HC99} of Harish-Chandra about its character $\Theta_{\pi}$ \cite[Preface, p. x]{HC99} that

\begin{enumerate}[label=(\Roman*)]
    \item\label{1} The distribution $\Theta_{\pi}|_{\Grs}$ is represented \cite[p. 1, (\dag)]{HC99} by a locally constant function $F_{\pi}\in C^{\infty}(\Grs)^G$.
    \item\label{3} The function $F_{\pi}$ is locally-$L^1$ on $G$.
    \item\label{4} The distribution $\Theta_{\pi}$ is represented by $F_{\pi}$.
\end{enumerate}

Consider any $s\in\Gss$ and $\bbH:=Z_{\bbG}(s)$. We write $H=\bbH(F)=Z_G(s)$ and $\fh=\Lie H=(\Lie\bbH)(F)$, and $\fh^*:=\Hom_F(\fh,F)$. For any nilpotent orbit $\cO^*\subset\fh^*$, we consider $\hat{I}^H_{\cO^*}$ the Fourier transform of the nilpotent orbital integral. We prove

\begin{enumerate}[label=(\roman*)]
    \item\label{i} The distribution $\hat{I}^H_{\cO^*}|_{\fhrs}$ is represented by a locally constant function $\hat{J}^H_{\cO^*}\in C^{\infty}(\fhrs)^H$.
    \item\label{iii} $\hat{J}^H_{\cO^*}$ is locally-$L^1$ on $\fh$.
    \item\label{iv} The distribution $\hat{I}^H_{\cO^*}$ is represented by the function $\hat{J}^H_{\cO^*}$.
\end{enumerate}

\section{The proof}

\begin{enumerate}[label=Step \arabic*.]
    \item\label{b} Apply local character expansion, namely Hypothesis \ref{A} (resp. \ref{B}), to $\Theta_{\pi}$ (resp. $\hat{I}^H_{\cO^*}$) and $s\in\Grs$ (resp. $X_s=X\in\fhrs$). The nilpotent cone of $Z_G(s)$ (resp. $Z_H(X_s)$) is exactly $\{0\}$ and the Fourier transform of any distribution supported on $\{0\}$ is constant. Therefore $\Theta_{\pi}$ (resp. $\hat{I}^H_{\cO^*}|_{\fhrs}$) is locally constant near $s$ (resp. $X$). This proves \ref{1} (resp. \ref{i}).
    \item\label{d} The assertions \ref{3}, \ref{4}, \ref{iii} and \ref{iv} are all local statements. For any element $g\in G$ (resp. $X\in\fh$) and any neighborhood of its semisimple part $g_s\in G$ (resp. $X_{s}\in\fh$), we have that $g$ (resp. $X$) is conjugate to an element in the neighborhood. Thus
    it suffices to prove \ref{3} and \ref{4} locally near every semisimple $s\in \Gss$, as well as \ref{iii} and \ref{iv} locally near every semisimple $X\in\fhss$.
    \item\label{a}  Thanks to Hypothesis \ref{A}, near any semisimple $s\in G$ the distribution $\Theta_{\pi}$ can be identified with a linear combination of nilpotent orbital integrals on $Z_{\fg}(s)^*$. Thus \ref{3} and \ref{4} follow from \ref{iii} and \ref{iv} for $\bbH=Z_{\bbG}(s)$.
    \item Thanks to Hypothesis \ref{D}\ref{hyp:ss}, we may assume that $H$ is semisimple.
    \item\label{f} By induction (and Hypothesis \ref{D}\ref{hyp:Z-Z}) we assume \ref{iii} and \ref{iv} for groups of the form $Z_{\bbH}(X)\subsetneq\bbH$ for $X\in\fhss$. Hypothesis \ref{B} then implies that \ref{iii} and \ref{iv} hold near any non-central (i.e. non-zero) $X\in\fhss$. 
    Equivalently, we know \ref{iii} and \ref{iv} near any element with a non-zero semisimple part. It remains to prove the statement near $0\in\fh$.
\end{enumerate}
Let $T\subset H$ be a maximal $F$-torus, $\ft:=\Lie T$ and $\ftr:=\fhrs\cap\ft$. For $Y\in\ftr$ denote by $I_Y^H(f)=\int_{H/T}f(\Ad(h)Y)\frac{dh}{dt}$. Consider the following statement:
\begin{enumerate}[label=(\roman*)$_T$]
\setcounter{enumi}{1}
    \item\label{iiiT} For any $f\in C_c^{\infty}(\fh)$, the function on $\ftr$ given by
    \[
    Y\mapsto|D_{\fh}(Y)|\cdot\hat{J}^H_{\cO^*}(Y)\cdot I_Y^H(f)
    \] 
    is locally-$L^1$ on $\ft$.
\end{enumerate}
\begin{enumerate}[label=Step \arabic*.]
\setcounter{enumi}{5}
    \item For any $f\in C_c^{\infty}(\fh)$, the Weyl integration formula implies that
    \begin{equation}\label{eq:Weyl}
    \int_{\fhrs}\hat{J}_{\cO^*}^H(X)\cdot f(X)dX=\sum_T\frac{1}{|N_H(T)/T|}\int_{\ftr}|D_{\fh}(Y)|\cdot\hat{J}^H_{\cO^*}(Y)\cdot I_Y^H(f)dY
    \end{equation}
    over a set of representatives $\{T\}$ of $H$-conjugacy classes of maximal $F$-tori in $H$, and
    \[\int_{\fhrs}|\hat{J}_{\cO^*}^H(X)|\cdot |f(X)|dX=\sum_T\frac{1}{|N_H(T)/T|}\int_{\ftr}|D_{\fh}(Y)|\cdot|\hat{J}^H_{\cO^*}(Y)|\cdot I_Y^H(|f|)dY
    \]
    Therefore \ref{iii} is equivalent to \ref{iiiT} holding for every maximal $F$-torus $T\subset H$. More precisely, \ref{iii} holding near $X\in\fhss$ is equivalent to \ref{iiiT} holding near $X$ for every maximal $F$-torus $T\subset H$ with $X\in\ft$. In particular, \ref{f} implies that \ref{iiiT} hold on $\ft^r\bsl\{0\}$.
    It remains to prove \ref{iiiT} near $0\in\ft$.
    \item\label{k} Choose a uniformizer $\varpi\in F^{\times}$ with $|\varpi|=q^{-1}$. To prove \ref{iiiT} near $0$, consider a sufficiently small lattice $\Lambda_{\ft}\subset\ft$. 
    For $Y\in\Lambda_{\ft}\cap\ft^r$ we have Shalika germ expansion \cite[Theorem 2.1.1]{Sha72}
    \[
    I_Y^H(f)=\sum_{\cO}\Gamma_{\cO}(Y)\cdot I_{\cO}^H(f).
    \]
    and a scaling property \cite[\S17]{Kot05} (and \cite[Theorem A]{Pre03} when $\cha(F)>0$ under Hypothesis \ref{D}) that
    \[\begin{array}{l}|D_{\fh}(\varpi^2 Y)|=q^{-2\dim H+2\dim T}\cdot |D_{\fh}(Y)|,\\
    \hat{J}^H_{\cO^*}(\varpi^2 Y)=q^{\dim\cO^*}\cdot\hat{J}^H_{\cO^*}(Y),\\
    \Gamma_{\cO}(\varpi^2Y)=q^{\dim\cO}\cdot\Gamma_{\cO}(Y).\end{array}\]
    Consequently, for every $Y\in\Lambda_{\ft}\cap\ftr$ we have
    \[
    \begin{array}{lll}
    &&|D_{\fh}(\varpi^2 Y)|\cdot\hat{J}_{\cO^*}^H(\varpi^2 Y)\cdot \Gamma_{\cO}(\varpi^2 Y)\\
    =q^{-2\dim H+2\dim T+\dim\cO^*+\dim\cO}&\times&|D_{\fh}(Y)|\cdot\hat{J}_{\cO^*}^H(Y)\cdot\Gamma_{\cO}(Y)
    \end{array}
    \]
    In other words, the integrand on the RHS of \eqref{eq:Weyl} can be separated according to $\dim\cO$, so that each term has its own scaling. Since $\dim\cO^*+\dim\cO<2\dim H$, we are now in the situation of the following elementary lemma which implies \ref{iiiT} and thus \ref{iii}.
    \begin{lemma} Let $\Lambda$ be a free $\cO_F$-module of rank $r<\infty$, and $U\subset\Lambda$ an open subset satisfying $Y\in U\implies\varpi^2Y\in U$. Suppose we have integers $d_1<d_2<...<d_n<2r$ and functions $f_1,...,f_n\in C_c^{\infty}(U)$ satisfying
    \[
    f_j(\varpi^2Y)=q^{d_j}\cdot f_j(Y),\;\forall Y\in\Lambda,\;j=1,...,n.
    \]
    Suppose moreover that $\sum_{j=1}^n f_j$ is $L^1$ on  $\varpi^{2i}\Lambda\bsl\varpi^{2i+2}\Lambda$ for every $i\in\Z_{\ge 0}$. Then all $f_j$ are $L^1$ on $\Lambda$.
    \end{lemma}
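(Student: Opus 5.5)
The plan is to reduce everything to the single annulus $A_0:=\Lambda\bsl\varpi^2\Lambda$ using the scaling relation, and then to separate the individual $f_j$ on $A_0$ by a Vandermonde argument. Throughout I regard each $f_j$ as a function on $\Lambda$, extended by zero outside $U$. The stability of $U$ under $Y\mapsto\varpi^2Y$ makes the identity $f_j(\varpi^2Y)=q^{d_j}f_j(Y)$ consistent with this extension wherever both sides are defined. Write $A_i:=\varpi^{2i}\Lambda\bsl\varpi^{2i+2}\Lambda$, so that $\Lambda\bsl\{0\}=\bigsqcup_{i\ge0}A_i$, and $\{0\}$ has measure zero.

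\textbf{Step 1: reduction to $A_0$.} The map $Y\mapsto\varpi^{2i}Y$ is a bijection $A_0\to A_i$ which multiplies the translation-invariant measure by $q^{-2ri}$, since $\Lambda$ has rank $r$. Iterating the scaling relation gives $f_j(\varpi^{2i}Y)=q^{d_ji}f_j(Y)$. Therefore
\[
\int_{A_i}|f_j|=q^{(d_j-2r)i}\int_{A_0}|f_j|.
\]
Since $d_j<2r$, the geometric series $\sum_{i\ge0}q^{(d_j-2r)i}$ converges. Hence $f_j$ is $L^1$ on $\Lambda$ as soon as it is $L^1$ on $A_0$.

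\textbf{Step 2: separating the $f_j$ on $A_0$.} For $i=0,1,\dots,n-1$, define on $A_0$ the functions
\[
g_i(Y):=\Big(\sum_{j=1}^n f_j\Big)(\varpi^{2i}Y)=\sum_{j=1}^n q^{d_ji}f_j(Y).
\]
The same change of variables as in Step 1 shows $\int_{A_0}|g_i|=q^{2ri}\int_{A_i}\big|\sum_j f_j\big|<\infty$ by hypothesis. So each $g_i$ is $L^1$ on $A_0$.

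The matrix $(q^{d_ji})_{0\le i\le n-1,\,1\le j\le n}$ is a Vandermonde matrix in the nodes $q^{d_1},\dots,q^{d_n}$. These nodes are pairwise distinct because the $d_j$ are distinct integers and $q>1$, so the matrix is invertible. Inverting it expresses each $f_j|_{A_0}$ as a constant linear combination of $g_0,\dots,g_{n-1}$. Each $f_j$ is therefore $L^1$ on $A_0$, and Step 1 concludes the proof.

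I do not expect a genuine obstacle here. The only points needing care are bookkeeping ones:
\begin{itemize}
\item checking that the scaling relation is used only at points where it is valid, which is guaranteed by the $\varpi^2$-stability of $U$ and the extension by zero;
\item confirming that the Haar measure scales by exactly $q^{-2r}$ under multiplication by $\varpi^2$ on a rank-$r$ lattice.
\end{itemize}
The essential input is that the exponents $d_j$ are distinct, which is what makes the Vandermonde inversion possible, and that all $d_j<2r$, which is what makes the series converge.
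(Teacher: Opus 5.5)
Your proposal is correct and follows the paper's proof: the paper's phrase ``any $f_j$ can be interpolated from $\sum f_j$'' is exactly your Vandermonde inversion over the rescaled copies of $\sum_j f_j$, and it then uses the same geometric series with ratio $q^{d_j-2r}<1$. The only difference is cosmetic: you interpolate once on $\Lambda\bsl\varpi^2\Lambda$ and transport the bound by scaling, whereas the paper phrases the interpolation annulus by annulus.
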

    \begin{proof} Firstly any $f_j$ can be interpolated from $\sum f_j$, so that each $f_j$ is $L^1$ on $\varpi^{2i}\Lambda\bsl\varpi^{2i+2}\Lambda$. Then each $f_j$ is $L^1$ on $\Lambda$ because the geometric progression $\left\|f_j\right\|_{\varpi^{2i}\Lambda\bsl\varpi^{2i+2}\Lambda}$ has ratio $<1$.
    \end{proof}
    \item Having proved \ref{iii}, we have that $\hat{J}^H_{\cO^*}\in C^{\infty}(\fhrs)^H$ defines a distribution on $\fh$, which we again denote by $\hat{J}^H_{\cO^*}$. We have to prove that $\hat{J}^H_{\cO^*}=\hat{I}^H_{\cO^*}$. By our induction on $\bbH$, the two distributions agree near any non-zero semisimple element. Hence their difference $\hat{I}^H_{\cO^*}-\hat{J}^H_{\cO^*}$ is supported on the nilpotent cone, and therefore $\hat{I}^H_{\cO^*}-\hat{J}^H_{\cO^*}=\sum c_{\cO}I_{\cO}^H$ for some $c_{\cO}\in\C$.
    \item\label{m} Write $f_{\varpi^2}(Y):=f(\varpi^2Y)$ for $f\in C_c^{\infty}(\fh)$.
    Any $I_{\cO}^H$ satisfies 
    \[I_{\cO}(f_{\varpi^2})=q^{\dim\cO}I_{\cO}(f).\] Meanwhile, we have $\widehat{f_{\varpi^2}}=q^{2\dim H}\cdot(\hat{f})_{\varpi^{-2}}$. Therefore
    \[
    \begin{array}{c}
    \hat{I}^H_{\cO^*}(f_{\varpi^2})=q^{2\dim H-\dim\cO^*}\cdot\hat{I}_{\cO^*}^H(f).\\
    \hat{J}^H_{\cO^*}(f_{\varpi^2})=q^{2\dim H-\dim\cO^*}\cdot\hat{J}_{\cO^*}^H(f).
    \end{array}
    \]
    Hence any $\cO$ with $c_{\cO}\not=0$ satisfies $\dim\cO=2\dim H-\dim\cO^*$, which is absurd because (again!) $\dim\cO+\dim\cO^*<2\dim H$. Hence $\hat{I}^H_{\cO^*}-\hat{J}^H_{\cO^*}=0$, and we have proved \ref{iv}.
\end{enumerate}

\section{Remarks}

\begin{remark} A similar argument shows that $\hat{J}^H_{\cO^*}$ is locally bounded by $O(|D_{\fh}|^{-1/2})$. To see this, fix a maximal $F$-torus $T\subset H$. By induction $\hat{J}^H_{\cO^*}$ is locally bounded by $O(|D_{\fh}|^{-1/2})$ near any non-central semisimple element in $\ft$. By scaling as in \ref{k} the bound then holds on $\Lambda_{\ft}\cap\ftr$.
\end{remark}

\begin{remark}\label{rmk:alg} Suppose $\cC$ is any field for which
\begin{enumerate}[label=\arabic*.]
    \item We have a non-trivial homomorphism $\psi:(F,+)\ra \cC^{\times}$ with open kernel. In particular $\cha(\cC)\not=p$.
    \item\label{cond:2} We have $q^{2n}\not=1$ in $\cC$ for every $0<n\le\dim\bbG^{\mathrm{der}}$.
    \item\label{cond:3} $\cha(\cC)$ does not divide $|W_{\bbG}|$. (It is easy to see that \ref{cond:2}$\implies$\ref{cond:3}.)
\end{enumerate} 
We write $\cC^{\infty}(\Grs)$, etc., to emphasize $\cC$-valued locally constant functions on $\Grs$. Consider a finitely generated admissible $\cC$-representation $\pi$, its character $\Theta_{\pi}:\cci(G)\ra\cC$, and Fourier transforms of $\cC$-valued orbital integrals (in the sense of e.g. \cite[Corollary 3.39]{Tsa26}). Our method constructs the distribution $\Theta_{\pi}$ (resp. $\hat{I}_{\cO^*}^H$) in terms of $F_{\pi}\in \cC^{\infty}(\Grs)^G$ (resp. $\hat{J}_{\cO^*}^H\in \cC^{\infty}(\fhrs)^H$) using geometric series. 

This can be seen as follows: Fix any $f\in\cci(\fh)$. On the RHS of \eqref{eq:Weyl}, we would like to realize $|D_{\fh}(Y)|\cdot\hat{J}^H_{\cO^*}(Y)\cdot I_Y^H(f)dY$ as a measure on $\ft$, in the sense that it is a map
\[\mu^T_f:\{\text{open compact subsets $E\subset\ft$}\}\to\cC\]
that is additive in the sense $\mu^T_f(E_1\sqcup E_2)=\mu^T_f(E_1)+\mu^T_f(E_2)$. (When $\cC=\C$ this was done using traditional absolute convergence.) A measure can be defined locally and glued together, and by induction using Hypothesis \ref{B} we may assume that $\mu^T_f$ has been defined everywhere locally except near $\{0\}=Z(\fh)\subset\ft$. 
Then, by \ref{k}, $\mu^T_f$ can be defined locally near $0\in\ft$ using geometric series (of ratio $\not=1$ in $\cC$). The support of $\mu^T_f$ is evidently compact, and we define the LHS of \eqref{eq:Weyl} as $\hat{J}_{\cO^*}^H(f):=\sum_T\frac{1}{|N_H(T)/T|}\mu^T_f(E^T)\in\cC$ for any collection of $E^T\supset\supp(\mu^T_f)$. By \ref{m} we have $\hat{I}_{\cO^*}^H(f)=\hat{J}_{\cO^*}^H(f)$ for all $f\in\cci(\fh)$. The group case then follows from Hypothesis \ref{A}.



When $\cha(C)=\ell$ is positive and large enough, the fact that $\Theta_{\pi}$ and $\hat{I}_{\cO^*}$ are determined by their restrictions to regular semisimple elements was shown in \cite[D.4 Th\'{e}or\`{e}me]{VW01}. Their method likely gives much better bounds on $\ell$. As noted {\it loc. cit.}, the restrictions no longer determine the distributions when e.g. $\ell=2$ and $\bbG=\GL_2$.
\end{remark}

\begin{remark} For classical groups, the Cayley transform substituted for the exponential map. Hence we conclude that $\Theta_{\pi}$ is represented by a locally-$L^1$ function when $G=\mathrm{U}_{n}(\F_q((t)))$, $\SO_{2n+1}(\F_q((t)))$, $\Sp_{2n}(\F_q((t)))$, and $\SO_{2n}(\F_q((t)))$ with $p>6n+4$, and also for isogenous groups such as Spin groups, using the local character expansion result from \cite[Corollary 12.10]{AK07}. (The cited corollary uses the language of locally constant functions, but its proof is totally at the level of distributions.) Integrability results when $\cha(F)>0$ were obtained in several works, including \cite[Theorem 2.1]{CGH14} and \cite[Theorem C, Proposition D and Theorem F]{AGKSGH26}; see \cite[\S1.2]{AGKSGH26} for a beautiful discussion.
\end{remark}

\begin{remark} The proof in \ref{k} shows that $\hat{J}_{\cO^*}^H$ is locally-$L^{\alpha}$ for any
\begin{equation}\label{eq:alphabound}
\alpha<\min_{M,\cO_M^*,\cO_M}\frac{2\dim M^{\mathrm{der}}-\dim\cO_M}{\dim\cO_M^*}
\end{equation}
where the minimum is taken over (not necessarily tame) twisted Levi subgroups $M\subset H$ and nilpotent orbits $\cO_M\subset\fm$ and $\cO_M^*\subset\fm^*$, with the constraint that $\cO_M^*\subset\ol{\cO^*}$ where $\ol{\cO^*}$ denotes the $p$-adic closure, see Lemma \ref{lem:closure} below.

Such $L^{\alpha}$-results for $\alpha>1$ were already obtained in \cite[Theorem A, B and C]{GGH23}. To the best of our knowledge, non-overlapping cases are treated. The results agree e.g. for generic representations / regular nilpotent orbits, for which both methods show the bound to be sharp. When $H=\GL_n$, the term on the RHS of \eqref{eq:alphabound} agrees with the term in \cite[Remark 1.7]{GGH23} for the Levi $M=\GL_{n_k}\times\GL_1^{n-n_k}\subset\GL_n$ appearing {\it loc. cit.}. We remark that no constraint for $\cO_M$ is available, except that when $M$ is not quasi-split the dimension becomes obviously smaller than that of regular orbits. 
\end{remark}

\section{Hypotheses}\label{sec:hyp}

Let $F$ and $\bbG$ be as before. The hypotheses  are \ref{D}, \ref{C}, \ref{A} and \ref{B} as follows:

\begin{enumerate}[label=(\Alph*)]
    \item\label{D} Either $\cha(F)=0$, or $\cha(F)=p$ does not divide the order of the absolute Weyl group of $\bbG$. By \cite[Proposition 48]{Mc04} this\footnote{The cited proposition was written for linear algebraic groups with $p>\rank_{\bar{F}}\bbG^{\der}+1$. The proof uses that $p$ is very good for certain semisimple subgroups of $\bbG$, and works without change under our hypothesis.} implies that Jordan decompositions always exist in $H=Z_G(s)$ for any $s\in\Gss$. In particular we do not treat the subtle case of $\GL_p(\F_p((t)))$ as in \cite{AGKSGH26}. We also have
    \begin{enumerate}[label=(\arabic*)]
        \item Separability of adjoint actions so that orbital integrals can be defined \cite[Lemma 44, Theorem 45]{Mc04}.
        \item The number of $G$-conjugacy classes of maximal $F$-tori in $G$ is finite.
        \item Weyl integration formula holds.
        \item Regular semisimple elements are Zariski open dense \cite[Proposition 3.8]{MO-Kno19, Moh03} for $\bbG$ as well as $\Lie\bbH$.
        \item $\fh\cong\fh^*$ as $H$-representations.
        \item\label{hyp:ss} $\fh\cong Z(\fh)\times\fh^{\mathrm{der}}$.
        \item\label{hyp:Z-Z} For any $X\in\fh^{\mathrm{ss}}$, there exists $s_X\in\Gss$ such that $Z_H(X)=Z_G(s_X)$.
    \end{enumerate}
\end{enumerate}


\begin{enumerate}[label=(\Alph*)]
\setcounter{enumi}{1}    \item\label{C} For any subgroup $H:=Z_G(s)$ for some $s\in \Gss$, we fix an $H$-equivariant map $\exp_H:\fu_H\xra{\sim} U_H$ where $\fu_H\subset\fh$ (resp. $U_H\subset H$) is an $H$-invariant open subset containing $0$ (resp. $1$), and such that $(d\exp_H)_0=\id_{\fh}$.
\end{enumerate}

\begin{remark} The map $\exp_H$ is well-known to exist when $\cha(F)=0$. Under Hypothesis \ref{D}, $\exp_H$ exists thanks to \cite[Lemma C.4]{BKV16}. Hypotheses \ref{A} and \ref{B} below (which are stated with $\exp_H$) are not supposed to depend on the choice of $\exp_H$, but we will not study this issue.
\end{remark}

The next hypotheses are about semisimple descents of $\Theta_{\pi}$ and $\hat{I}_{\cO}^G$, which is essentially \cite[Theorem 16.2]{HC99}, but for our need we will spell it as follows:

\begin{enumerate}[label=(\Alph*)]
\setcounter{enumi}{2}
    \item\label{A} Let $s\in\Gss$ and $M:=Z_G(s)$. Take $\fm^{\perp}$ to be the $M$-invariant complement of $\fm$ in $\fg$.
    There exist sufficiently small lattices (depending on $\Theta_{\pi}$) $\Lambda_{\fm}\subset\fm$ and $\Lambda_{\fm^{\perp}}\subset\fm^{\perp}$ such that 
    \[\begin{array}{cccccl}c_s:&\Lambda_{\fm}&\times&\Lambda_{\fm^{\perp}}&\ra& G\\
    &(Y&,&Y')&\mapsto&\Ad(\exp_G(Y'))(s\cdot\exp_G(Y))
    \end{array}\]
    is an open embedding of $F$-analytic manifolds, and an $M$-invariant distribution $\delta_M\in D(\fm^*)^M$ supported on the nilpotent cone of $\fm^*$, such that for any $f_{\fm}\otimes f_{\fm^{\perp}}\in C_c^{\infty}(\Lambda_{\fm})\otimes_{\C}C_c^{\infty}(\Lambda_{\fm^{\perp}})=C_c^{\infty}(\Lambda_{\fm}\times\Lambda_{\fm^\perp})$ we have
    \[
    \Theta_{\pi}((c_s)_*(f_{\fm^\perp}\otimes f_{\fm}))=\delta_\fm(\FT_{\fm}(f_\fm))\cdot\int_{\Lambda_{\fm^{\perp}}}f_{\fm^\perp}(Y')dY'
    \]
    where $\mathrm{FT}_{\fm}:C_c^{\infty}(\fm)\ra C_c^{\infty}(\fm^*)$ is the Fourier transform. In other words, the distribution $c_s^*(\Theta_{\pi})$ is the tensor product of $\what{\delta_{\fm}}$ with a constant distribution on $\Lambda_{\fm^{\perp}}$.
    \item\label{B} 
    Let $\bbH=Z_{\bbG}(s)$ for some $s\in \Gss$. Fix $X_s\in\fh^{\mathrm{ss}}$ and $M:=Z_H(s)$.  Take $\fm^{\perp}$ to be the $M$-invariant complement of $\fm$ in $\fh$. Let $\delta$ be any invariant distribution on the nilpotent cone of $\fh^*$, and $\hat{\delta}$ its Fourier transform. Then there exist sufficiently small lattices $\Lambda_{\fm}\subset\fm$ and $\Lambda_{\fm^{\perp}}\subset\fm^{\perp}$ such that 
    \[\begin{array}{cccccl}c_{X_s}:&\Lambda_{\fm}&\times&\Lambda_{\fm^{\perp}}&\ra&\fh\\
    &(Y&,&Y')&\mapsto&\Ad(\exp_H(Y'))(X_s+Y)
    \end{array}\]
    is an open embedding of $F$-analytic manifolds, and an $M$-invariant distribution $\delta_\fm\in D(\fm^*)^M$ supported on the nilpotent cone of $\fm^*$, such that for any $f_{\fm}\otimes f_{\fm^{\perp}}\in C_c^{\infty}(\Lambda_{\fm})\otimes_{\C}C_c^{\infty}(\Lambda_{\fm^{\perp}})=C_c^{\infty}(\Lambda_{\fm}\times\Lambda_{\fm^\perp})$ we have
    \[
    \delta(\FT_{\fh}\left((c_{X_s})_*(f_{\fm^\perp}\otimes f_{\fm})\right))=\delta_\fm(\FT_{\fm}(f_\fm))\cdot\int_{\Lambda_{\fm^{\perp}}}f_{\fm^\perp}(Y')dY'.
    \]
    In other words, the distribution $c_{X_s}^*(\what{\delta})$ is the tensor product of $\what{\delta_{\fm}}$ with a constant distribution on $\Lambda_{\fm^{\perp}}$.
\end{enumerate}

\begin{remark}\label{rmk:char0} Howe originally stated and proved his local character expansion for $\GL_n$ near the identity \cite[Proposition 3]{How74} at the level of distributions. Harish-Chandra stated local character expansions \cite[Theorem 16.2]{HC99} in terms of local integrability theorems. It was observed in \cite[Th\'{e}or\`{e}me E.4.4]{VW01} that Harish-Chandra's proof of the local character expansions directly works for mod-$\ell$ coefficients, and in particular does not rely on local integrability. We plan to supply some additional exposition on this in a forthcoming version of \cite{Tsa26}.
\end{remark}

\begin{lemma}\label{lem:closure} Let $X_s\in\fhss$ and $M:=Z_H(X_s)$. We identify $\fm^*\ira\fh^*$ as the orthogonal complement to $\fm^{\perp}\subset\fh$. The local character expansion of $\hat{I}^H_{\cO^*}$ near $X_s$ is a linear combination of $\hat{I}^M_{\cO_M^*}$ with $\cO_M^*\subset\ol{\cO^*}$.
\end{lemma}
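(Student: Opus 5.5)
We will compare supports of Fourier transforms. Near $X_s$, Hypothesis \ref{B} identifies $c_{X_s}^*(\hat{I}^H_{\cO^*})$ with $\what{\delta_\fm}\otimes(\text{constant})$, where $\delta_\fm$ is an $M$-invariant distribution on the nilpotent cone of $\fm^*$. By the finiteness of nilpotent orbits (available under Hypothesis \ref{D}), $\delta_\fm$ is a linear combination of nilpotent orbital integrals $I^M_{\cO_M^*}$, and by the standard homogeneity argument (as in \ref{m}) the coefficients can be isolated. So the lemma reduces to showing that every $\cO_M^*$ in the support of $\delta_\fm$ lies in $\ol{\cO^*}$.

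The key input is that the Fourier transform of $\hat{I}^H_{\cO^*}$ is $I^H_{\cO^*}$, which is supported on $\ol{\cO^*}$. Suppose some $\cO_M^*\not\subset\ol{\cO^*}$ has nonzero coefficient in $\delta_\fm$. Choose $\cO_M^*$ maximal among such orbits for the closure order. Then pick a point $\xi\in\cO_M^*$ and a small neighborhood $V$ of $\xi$ in $\fh^*$ disjoint from $\ol{\cO^*}$ (possible since $\ol{\cO^*}$ is closed). We want a test function $f=(c_{X_s})_*(f_{\fm^\perp}\otimes f_\fm)$ whose Fourier transform is concentrated in $V$. Then $\hat{I}^H_{\cO^*}(f)=I^H_{\cO^*}(\FT f)$ vanishes, while the right side of Hypothesis \ref{B} is $\delta_\fm(\FT_\fm f_\fm)\int f_{\fm^\perp}$, which we want to be nonzero.

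To make this precise, we rescale. We replace $f_\fm$ by functions of the form $Y\mapsto\psi(\langle\xi,Y\rangle)\phi(\varpi^{-2k}Y)$-type wave packets localized near $0$ in $\fm$, with $\FT_\fm$ concentrated near $\xi$ at scale tied to $k$. Using the homogeneity of $I^M_{\cO^*_M}$ from \ref{m}, the pairing $\delta_\fm(\FT_\fm f_\fm)$ picks out the orbits whose closure meets a small neighborhood of $\xi$. By maximality, only $\cO_M^*$ itself and orbits in $\ol{\cO^*}$ are relevant, and the latter cannot meet $V$. So the pairing is a nonzero multiple of $I^M_{\cO_M^*}$ evaluated on a bump near $\xi$.

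The main obstacle is controlling $\FT_\fh$ of the pushforward $(c_{X_s})_*(f_{\fm^\perp}\otimes f_\fm)$. This requires showing it is concentrated near $M\cdot\xi$ thickened in the $\fm^{\perp,*}$ direction (which pairs trivially with $\fm$), and hence near $H\cdot(\xi+\text{small})$. We must also check that this set avoids $\ol{\cO^*}$. I would first try a stationary-phase-free argument: take $f_{\fm^\perp}$ supported in a tiny lattice, so that $\exp_H(Y')$ is close to $1$ and the map $c_{X_s}$ is close to the linear map $(Y,Y')\mapsto X_s+Y+[Y',X_s]$. Then I would use that $\ol{\cO^*}$ is closed and $H$-invariant to absorb the error, after intersecting with the open set where the $\fm^*$-component is near $\xi$.
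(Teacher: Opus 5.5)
\textbf{Verdict: there is a genuine gap.} Your strategy is the paper's. You pick an $\cO_M^*$ that is maximal in the expansion, and test against $f_\fm\otimes f_{\fm^\perp}$ with $\FT_\fm f_\fm$ a dilated bump near a point of $\cO_M^*$. You then compare with the fact that $I^H_{\cO^*}$ lives on $\ol{\cO^*}$. Your ``maximal among orbits not in $\ol{\cO^*}$'' is equivalent to maximal in the expansion: a larger orbit in the expansion lying in $\ol{\cO^*}$ would force $\cO_M^*\subset\ol{\cO^*}$. However, the step you call the main obstacle is essentially the whole proof, and the route you sketch does not get through it.

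\textbf{First problem: Fourier localization and dilation.} The function $(c_{X_s})_*(f_{\fm^\perp}\otimes f_\fm)$ is supported in $X_s+L$ for a fixed small lattice $L$. So its $\FT_\fh$ is $\psi(\langle X_s,\cdot\rangle)$ times an $L^\vee$-invariant function. It therefore cannot be concentrated in a small neighborhood $V$ of a fixed $\xi$. What you can arrange is concentration in a dilate $\varpi^{-2k}V'$. To conclude that this dilate misses $\ol{\cO^*}$, you need $\varpi^{2}\ol{\cO^*}=\ol{\cO^*}$, which is the homogeneity of nilpotent orbits. That does not follow from $\ol{\cO^*}$ being closed and $H$-invariant. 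Relatedly, your wave packet must be $\psi(\langle\varpi^{-2k}\xi,Y\rangle)\phi(\varpi^{-2k}Y)$. As you wrote it, $\psi(\langle\xi,Y\rangle)$ is constant on the support for $k\gg0$, and the Fourier transform sits near $0$.

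\textbf{Second problem, and the real missing idea: the two scales.} The support of $f_{\fm^\perp}$ must be tied to $k$ from both sides. With $\FT_\fm f_\fm=1_{\varpi^{-2k}U}$, the function $f_\fm$ varies at scale $\varpi^{2k}$. Meanwhile
$\Ad(\exp Y')(X_s+Y)=X_s+Y+[Y',X_s]+[Y',Y]+\frac12[Y',[Y',X_s]]+\cdots$.
\begin{itemize}
\item If $f_{\fm^\perp}$ lives on a fixed tiny lattice, the $\fm$-component of $\frac12[Y',[Y',X_s]]$ does not shrink with $k$. Your linearization $(Y,Y')\mapsto X_s+Y+[Y',X_s]$ is then false at the scale of $f_\fm$. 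Closedness cannot absorb an error that is large relative to that scale.
\item If instead $f_{\fm^\perp}=1_{\varpi^{m}\Lambda_{\fm^\perp}}$, the linearization needs $2m>2k+c$. But the Fourier transform then spreads over $\varpi^{-m}$ in the $\fm^{\perp,*}$-direction. After rescaling by $\varpi^{2k}$, this is a thickening of $U$ of size $\varpi^{2k-m}$, which is small only if $m<2k$ by a growing margin.
\end{itemize}
That this direction ``pairs trivially with $\fm$'' does not make it harmless: a fat thickening of $U$ meets other nilpotent orbits.

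\textbf{How the paper resolves it.} The paper takes $\FT_\fm f_\fm=1_{\varpi^{-4i}U_M}$ and $f_{\fm^\perp}=1_{\varpi^{3i}\Lambda_{\fm^\perp}}$.
\begin{itemize}
\item The quadratic terms are then $O(\varpi^{6i})$. This is finer than the scales $\varpi^{4i}$ and $\varpi^{3i}$ (up to fixed lattices) on which the two factors are locally constant.
\item Hence $(c_{X_s})_*f_i$ is literally the product $(l_{X_s}\circ\FT_\fm^{-1}(1_{\varpi^{-4i}U_M}))\otimes 1_{\varpi^{3i}[X_s,\Lambda_{\fm^\perp}]}$.
\item Its $\FT_\fh$ is supported in $\varpi^{-4i}(U_M+\varpi^{i}\Lambda')$, with $\Lambda'$ a fixed lattice in the annihilator of $\fm$.
\item Take $U_M$ compact open and disjoint from the closures of the finitely many nilpotent $H$-orbits whose closure does not contain $\cO_M^*$. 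Compactness plus $\varpi^{2}$-dilation invariance of nilpotent orbits then show that, for $i\gg0$, this support meets only orbits $\cO'$ with $\cO_M^*\subset\ol{\cO'}$.
\end{itemize}
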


\begin{proof} It suffices to consider an $\cO_M^*$ that is maximal in the local character expansion of $\hat{I}^H_{\cO^*}$ near $X_s$. Choose $X_M^*\in\cO_M^*$, and a neighborhood $U_M$ of $X_M^*$ in $\fm^*$ such that any nilpotent $\Ad^*(H)$-orbit meeting $U_M$ contains $\cO_M^*$ in its closure. Consider the test function $f_i:=\mathrm{FT}_{\fm}^{-1}(1_{\varpi^{-4i} U_M})\otimes 1_{\varpi^{3i}\Lambda_{\fm^{\perp}}}$ on $\Lambda_{\fm}\times\Lambda_{\fm^{\perp}}$.  For $i\gg 0$ we have
\begin{equation}\label{eq:LCEatXs}
\hat{I}_{\cO^*}^H((c_{X_s})_*f_i)=c_i\cdot I_{\cO_M^*}^M(1_{\varpi^{-4i} U_M})\not=0\text{, for some }c_i\not=0
\end{equation}
Denote by $l_{X_s}$ the translation by $X_s$, and $\psi_{X_s}$ the function on $\fm^*$ given by $X^*\mapsto\psi(\langle X_s,X^*\rangle)$. 
For $i\gg 0$, since $[\varpi^{3i}\Lambda_{\fm^{\perp}},\varpi^{3i}\Lambda_{\fm^{\perp}}]\subset\varpi^{4i}(\Lambda_{\fm}+\Lambda_{\fm^{\perp}})$, we have
\[
(c_{X_s})_*(\mathrm{FT}_{\fm}^{-1}(1_{\varpi^{-4i} U_M})\otimes 1_{\varpi^{3i}\Lambda_{\fm^{\perp}}})=(l_{X_s}\circ\mathrm{FT}_{\fm}^{-1}(1_{\varpi^{-4i} U_M}))\otimes 1_{\varpi^{3i}[X_s,\Lambda_{\fm^{\perp}}]}
\]
\[
\begin{array}{rl}
\implies&\mathrm{FT}_{\fh}\left((c_{X_s})_*(\mathrm{FT}_{\fm}^{-1}(1_{\varpi^{-4i} U_M})\otimes 1_{\varpi^{3i}\Lambda_{\fm^{\perp}}})\right)
\\=&(\psi_{X_s}\cdot1_{\varpi^{-4i} U_M})\otimes\mathrm{FT}_{\fm^{\perp}}\left(1_{\varpi^{3i}[X_s,\Lambda_{\fm^{\perp}}]}\right).
\end{array}
\]
By the property of $U_M$, for $i\gg 0$, the support of the last function intersects a nilpotent orbit $\cO^*$ only if $\cO_M^*\subset\ol{\cO^*}$. Hence \eqref{eq:LCEatXs} implies the lemma.
\end{proof}

\subsection*{Acknowledgment} This article grew out of a lecture series in Spring 2026 on characters of $p$-adic groups. I am very indebted to the audience, including Cailan Li, Chih-Kai Liao, Hao-An Wu, Hao-Ting Li, Jun-Hao Huang, Masao Oi, Shih-Yu Chen, Song-Yun Chen, Tzu-Yang Tsai, Wei-Hsuan Hsin, Wille Liu and Yu-Chun Chang, as well as the National Center for Theoretical Sciences in Taiwan, and the Harish-Chandra seminar at University of Michigan.

I am genuinely grateful to Stephen DeBacker for countless illuminating discussions about characters and very helpful suggestions on an earlier version. I very much thank Julia Gordan for a wonderful talk and subsequent discussions on local integrability. I really appreciate Aron Heleodoro, Jessica Fintzen, Marie-France Vign\'{e}ras, Masao Oi and Loren Spice for very inspiring conversations. Lastly, ChatGPT, Gemini and MathOverflow provided numerous helpful (counter-)examples, references and writing suggestions.

\bibliographystyle{amsalpha}
\bibliography{biblio.bib}
\end{document}
